\newtheorem{teo}{Theorem}
\newtheorem{lem}{Lemma}
\newtheorem{cor}{Corollary}
\newtheorem{remark}{Remark}
\newtheorem{con}{Condition}
\numberwithin{equation}{section}
\title{Convergence Rate for The Number of Crossing in a Random Labelled Tree}
\author{Santiago Arenas-Velilla \\\small Centro de Investigaci\'on en Matem\'aticas,\\[-0.8ex]
\small Guanajuato, Gto. 36000, Mexico\\
\small\tt santiago.arenas@cimat.mx \\\and Octavio Arizmendi\\ \small Centro de Investigaci\'on en Matem\'aticas,\\[-0.8ex]
\small  Guanajuato, Gto. 36000, Mexico\\
\small\tt octavius@cimat.mx\\}
\begin{document}
\maketitle

\begin{abstract}
    We consider the number of crossings in a random labelled tree with vertices in convex position. We give a new proof of the fact that this quantity is  asymptotically Gaussian with mean $n^2/6$ and variance $n^3/45$. Furthermore, we give an estimate for the Kolmogorov distance to a Gaussian distribution which implies a convergence rate of order $n^{-1/2}$.
\end{abstract}

\textbf{keywords}: crossings, random labelled trees, normal approximation.

\section{Introduction}

Random trees have been broadly studied in many directions.  Here we are interested in the number of crossings in a random labelled tree in convex position.
As proved by Arizmendi et al. \cite{arizmendi2019number}, this quantity satisfies a normal approximation as the number of vertices goes to infinity. This note considers a quantitative version of such result.

\begin{teo}
Let $X_n$ be the number of crossing in a random labelled tree with $n$ points in convex position. Then, as $n$ goes to infinity, $X_n$ approaches a normal distribution with mean $\approx n^2/6$ and variance $\approx n^3/45$ with convergence rate $n^{-1/2}$. Moreover, we have 
\begin{equation*}
\sup_{z \in \mathbb{R}} |\mathbb{P}\left(\frac{X_n-n^2/6}{\sqrt{n^3/45}} \leq z\right) - \mathbb{P}(Z \leq z)| \leq \frac{C}{\sqrt{n}},
\end{equation*}
where $Z$ is a standard Gaussian random variable and $C$ is a constant independent of $n$.
\end{teo}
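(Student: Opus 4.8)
The plan is to realize $X_n$ as a sum of weakly dependent indicators and to run Stein's method, with all the arithmetic powered by the generalized Cayley (forest-counting) formula. Write $X_n=\sum_{\alpha}I_\alpha$, where $\alpha$ ranges over the $\binom n4$ crossing configurations (a $4$-subset of vertices together with its unique pairing into two crossing chords $e,f$), $I_\alpha=Y_eY_f$, and $Y_e$ is the indicator that the chord $e$ lies in the random tree. The key input is that for any spanning forest whose components (isolated vertices included) have sizes $a_1,\dots,a_m$ one has $\E\prod_e Y_e=n^{m-2}\prod_i a_i/n^{n-2}$; applied to a partial matching with $k$ edges this factorizes exactly, $\E\prod_e Y_e=(2/n)^k=\prod_e\E Y_e$. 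I would first use this to get $\E X_n=\binom n4\,(4/n^2)\sim n^2/6$ and, after splitting $\sum_{\alpha,\beta}\mathrm{Cov}(I_\alpha,I_\beta)$ by the overlap type of $\alpha,\beta$, the variance $\sigma_n^2\sim n^3/45$. The decisive structural consequence of the factorization is \emph{locality}: if $\alpha$ and $\beta$ use disjoint vertex sets then their four chords form a matching, so $\E I_\alpha I_\beta=(2/n)^4=\E I_\alpha\,\E I_\beta$ and $\mathrm{Cov}(I_\alpha,I_\beta)=0$. Thus, although the tree is a global object, only configurations sharing a chord (covariance of order $n^{-3}$, with $O(n^2)$ partners each) or a single vertex (order $n^{-4}$, with $O(n^3)$ partners each) correlate, and both families contribute at the common order $n^3$ to $\sigma_n^2$.

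Next I would set up a size-biased coupling. Choose a configuration $\alpha$ uniformly and produce a tree $T^s$ distributed as the uniform tree conditioned to contain both chords of $\alpha$, coupled to the original $T$ so that they differ in only $O(1)$ edges: force each missing chord in, delete one edge of the cycle it closes, and choose the deletion law so as to reproduce the conditional measure. Since a single chord is crossed by at most $n-1$ of the tree's edges, editing $O(1)$ edges changes the crossing count by $O(n)$, whence $|X_n^s-X_n|=O(n)$ deterministically and $\E(X_n^s-X_n)^2=O(n^2)$. I would then invoke the size-biased Kolmogorov bound (as in the Chen--Goldstein--Shao monograph), controlled up to absolute constants by
\[
\frac{\mu}{\sigma_n^2}\sqrt{\mathrm{Var}\,\E[X_n^s-X_n\mid X_n]}+\frac{\mu}{\sigma_n^3}\,\E(X_n^s-X_n)^2
\]
together with lower-order bounded-difference terms, where $\mu=\E X_n\sim n^2$ and $\sigma_n\sim n^{3/2}$.

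Inserting the orders, the second term is $\sim n^2\cdot n^2/n^{9/2}=n^{-1/2}$, the target rate, and the bounded-difference corrections ($\sim \mu\,|X_n^s-X_n|^2/\sigma_n^3$) are of the same order; the first term is $n^{-1/2}$ as soon as $\mathrm{Var}\,\E[X_n^s-X_n\mid X_n]=O(n)$. I expect this conditional-variance estimate, together with checking that the add/delete rule really reproduces the conditional tree law, to be the main obstacle: both demand pushing the forest-formula covariance computations one order further, so as to show that the expected crossing-change, viewed as a function of the ambient tree, fluctuates only on the scale $\sqrt n$. The bookkeeping over overlap types (shared chord, one shared endpoint, two shared endpoints) should be the most delicate point, precisely because these are the terms that already produce the leading $n^3$ in the variance and must be tracked with matching precision. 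As a cross-check, feeding the same locality and moment estimates into the Chen--Shao local-dependence bound works too: the sparsity $\E I_\alpha\sim n^{-2}$ keeps the relevant neighborhood sums at order $n^4$, which after division by $\sigma_n^3\sim n^{9/2}$ again returns the rate $n^{-1/2}$.
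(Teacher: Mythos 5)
Your strategy coincides step for step with the paper's: the same decomposition of $X_n$ into $\binom{n}{4}$ crossing indicators, the same forest-counting (Pitman/Cayley) formula for joint moments and for the independence of disjoint configurations, the same add-the-two-chords/delete-a-cycle-edge size-bias coupling with $|X_n^s-X_n|=O(n)$, and the same size-biased Kolmogorov bound with identical order bookkeeping ($\mu\sim n^2$, $\sigma_n\sim n^{3/2}$) yielding $n^{-1/2}$. All of that is sound and matches the paper.

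The genuine gap is the one you flag yourself: the estimate $\mathrm{Var}\,\E[X_n^s-X_n\mid X_n]=O(n)$ is identified as ``the main obstacle'' but not proved, and without it the first term of the Stein bound is uncontrolled, so the theorem is not established. Your proposed route --- pushing the forest-formula covariance computations one order further --- is not straightforward as stated, because $\E[X_n^s-X_n\mid X_n]$ is not a polynomial in edge indicators with accessible coefficients: it involves averaging over the random index $I$ and over the randomness of the edge deletions. The paper fills this hole differently. It writes $\E[X_n^s-X_n\mid X_n]=\binom{n}{4}^{-1}\sum_i (X_n^{(i)}-X_n)$, where $X_n^{(i)}$ is the crossing count after forcing configuration $i$, and bounds the pairwise covariances $\mathrm{Cov}(X_n^{(i)}-X_n,\,X_n^{(j)}-X_n)$: for the $O(n^7)$ pairs with $i\cap j\neq\emptyset$ it uses Cauchy--Schwarz together with the deterministic bound $|X_n^{(i)}-X_n|\le 4(n-3)$; for the $O(n^8)$ disjoint pairs it uses an elementary lemma (bounded variables that are uncorrelated conditionally on an event $A$ have covariance at most $4C_1C_2\Pro(A^c)$), taking $A$ to be the event that no crossing configuration meets both $i$ and $j$, whose complement has probability at most $32/n$. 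Both families contribute $O(n^9)$, and dividing by $\binom{n}{4}^2\sim n^8$ gives the required $O(n)$. You would need to supply an argument of this kind, or genuinely carry out your moment computation, before the proof closes. A secondary, smaller gap: you assert rather than verify that your add/delete rule reproduces the law of the tree conditioned to contain the two chords of $\alpha$, which is what both the size-bias identity and the deterministic bound on $|X_n^s-X_n|$ require of the coupling.
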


We will use Stein's method, in the form of the size biased transform developed by Goldstein y Reinert \cite{goldstein1997stein}. We shall mention that our methods are similar to Paguyo's \cite{paguyo2021convergence}  where he gives a quantitative version of the classical result of Flajolet and Noy \cite{FlajoletNoy2000}.

\section{Preliminaries}

\subsection{Size Bias Transform}

Let $X$ be a positive random variable with mean $\mu$ finite. We say that the random variable $X^s$ has the \textit{size bias} distribution with respect to $X$ if for all $f$ such that $\mathbb{E}[Xf(X)] < \infty$, we have 
$$\mathbb{E}[Xf(X)] = \mu \mathbb{E}[f(X^s)].$$
In the case of $X = \sum_{i=1}^{n} X_i$, with $X_i$'s positive random variables  with finite mean $\mu_i$, there is a recipe to construct $X^s$ (Proposition 3.21 from \cite{ross2011fundamentals}) from the individual size bias distributions of the summands $X_i$:

\begin{enumerate}
\item For each $i = 1, \ldots, n$, let $X_i^{s}$  having the size bias distribution with respect to $X_i$, independent of the vector $(X_j)_{j \neq i}$ and $(X_j^s)_{j\neq i}$. Given $X_i^s = x$, define the vector $(X_{j}^{(i)})_{j \neq i}$ to have the distribution of $(X_j)_{j\leq i}$ conditional to $X_i = x$.
\item Choose a random index $I$ with $\mathbb{P}(I = i) = \mu_i / \mu$, where  $\mu = \sum \mu_i$, independent of all else. 
\item Define $X^s = \sum_{j \neq I} X_{j}^{(I)} + X_I^s$.
\end{enumerate}
It is important to notice that the random variables are not necessarily independent or have the same distribution. Also, $X$ can be an infinite sum (See Proposition 2.2 from \cite{chen2011normal}). 

If $X$ is a Bernoulli random variable, we have that $X^s = 1$. Indeed, if $\mathbb{P}(X=1)=p$, $\mathbb{E}(X)=p=\mu$ and then $$\mathbb{E}[Xf(X)] =(1-p)(0f(0))+p(1f(1))=pf(1)=\mu f(1)=\mu \mathbb{E}[f(1)].$$
Therefore, we have the following corollary (Corollary 3.24 from \cite{ross2011fundamentals}) by specializing the above recipe. 
\begin{cor}
\label{corollary_sizebiasBernoulli}
Let $X_1, X_2, \ldots, X_n$ be Bernoulli random variables with parameter  $p_i$. For each $i = 1, \ldots, n$ let $(X_j^{(i)})_{j \neq i}$ having the distribution of $(X_j)_{j \neq i}$ conditional on $X_i =1$. If $X = \sum_{i=1}^n X_i$, $\mu = \mathbb{E}[X]$, and $I$ is chosen independent of all else with $\mathbb{P}(I= i) = p_i /\mu$, then $X^s = 1+\sum_{j \neq I} X_j^{(I)}$ has the size bias distribution of $X$. 
\end{cor}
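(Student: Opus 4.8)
The plan is to verify directly the functional identity that \emph{defines} the size bias transform. Concretely, I must show that for every $f$ with $\mathbb{E}[Xf(X)] < \infty$ one has $\mathbb{E}[Xf(X)] = \mu\,\mathbb{E}[f(X^s)]$, where $X^s = 1 + \sum_{j\neq I} X_j^{(I)}$ is the candidate variable built in the statement. Since the relevant ingredients are already available---namely that a Bernoulli variable has size bias constantly equal to $1$, together with the three-step recipe recalled above---the proof reduces to unwinding the definitions and checking that the two sides of the identity yield the same weighted average of conditional expectations.

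First I would expand the left-hand side using linearity and the Bernoulli structure of the summands. Writing $X = \sum_{i=1}^n X_i$ gives $\mathbb{E}[Xf(X)] = \sum_{i=1}^n \mathbb{E}[X_i f(X)]$. The key elementary observation is that, because each $X_i$ takes only the values $0$ and $1$, the contribution from $\{X_i = 0\}$ is annihilated by the factor $X_i$, so that $\mathbb{E}[X_i f(X)] = p_i\,\mathbb{E}[f(X)\mid X_i = 1]$.

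Next I would identify the conditional expectation with the constructed variables. Conditioning on $X_i = 1$ forces $X = 1 + \sum_{j\neq i} X_j$, and by definition the conditional law of $(X_j)_{j\neq i}$ given $X_i = 1$ is exactly the law of $(X_j^{(i)})_{j\neq i}$. Hence $\mathbb{E}[f(X)\mid X_i = 1] = \mathbb{E}[f(1 + \sum_{j\neq i} X_j^{(i)})]$, and summing over $i$ gives $\mathbb{E}[Xf(X)] = \sum_{i=1}^n p_i\,\mathbb{E}[f(1+\sum_{j\neq i} X_j^{(i)})]$. On the other side, since $I$ is independent of everything with $\mathbb{P}(I=i) = p_i/\mu$, conditioning on the value of $I$ yields $\mu\,\mathbb{E}[f(X^s)] = \mu\sum_{i=1}^n \tfrac{p_i}{\mu}\,\mathbb{E}[f(1+\sum_{j\neq i} X_j^{(i)})]$, and the factors of $\mu$ cancel. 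The two expressions agree, which is precisely the claim. Equivalently, one may specialize the general recipe directly: for Bernoulli $X_i$ one has $\mu_i = p_i$ and $X_i^s \equiv 1$, so the third step reads $X^s = \sum_{j\neq I} X_j^{(I)} + 1$.

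I do not anticipate a genuine obstacle, since the content is bookkeeping rather than analysis. The only point requiring care is the conditioning step: one must ensure that ``conditional on $X_i = 1$'' is exactly the conditioning used to define the auxiliary vector $(X_j^{(i)})_{j\neq i}$, and that no independence among the $X_i$ is tacitly assumed, as the whole point of the recipe is to accommodate dependent summands. Once this identification is in place the defining identity follows immediately, with the hypothesis $\mathbb{E}[Xf(X)] < \infty$ ensuring that all expectations written above are well defined.
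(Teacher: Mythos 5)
Your proof is correct, but it takes a more self-contained route than the paper. The paper gives no direct verification of the defining identity: it first checks (just before the statement) that a Bernoulli variable has size bias distribution degenerate at $1$, and then obtains the corollary purely by specializing the three-step recipe of Proposition 3.21 of Ross (steps 1--3 recalled in the paper), citing it as Corollary 3.24 there. That specialization is exactly your closing ``equivalently'' remark, so you have the paper's argument as a one-liner; your main argument, however, bypasses the general recipe entirely and verifies $\mathbb{E}[Xf(X)] = \mu\,\mathbb{E}[f(X^s)]$ from scratch, via $\mathbb{E}[X_i f(X)] = p_i\,\mathbb{E}[f(X)\mid X_i=1]$, the identification of the conditional law of $(X_j)_{j\neq i}$ given $\{X_i=1\}$ with that of $(X_j^{(i)})_{j\neq i}$, and conditioning on the independent index $I$ so that $\mu\,\mathbb{E}[f(X^s)] = \sum_i p_i\,\mathbb{E}\bigl[f\bigl(1+\sum_{j\neq i} X_j^{(i)}\bigr)\bigr]$. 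What the direct computation buys is an elementary, dependence-free proof that in effect re-derives the recipe in the Bernoulli case, with the role of the independence of $I$ made explicit; what the paper's route buys is brevity and the placement of the corollary inside the general size-bias machinery, which the paper also invokes for non-Bernoulli and infinite-sum settings. Two minor points you could make explicit: if some $p_i = 0$ the conditioning defining $(X_j^{(i)})_{j\neq i}$ is vacuous, but that term vanishes on both sides of the identity, so nothing is lost; and the interchange of sum and expectation in $\mathbb{E}[Xf(X)] = \sum_i \mathbb{E}[X_i f(X)]$ is justified by the integrability hypothesis (or immediately for bounded $f$, which suffices to determine the law of $X^s$).
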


The following result (Theorem 5.3 from \cite{chen2011normal}) gives us bounds for the Kolmogorov distance, in the case that a bounded size bias coupling exists. This distance is given by
$$d_{Kol}(X,Y) := \sup_{z \in \mathbb{R}} |F_X(z) - F_Y(z)|,$$
where $F_X$ and $F_Y$ are the distribution functions of the random variables $X$ and $Y$.
\begin{teo}\label{teoKolmo}
Let $X$ be a non negative random variable with finite mean $\mu$ and finite, positive variance $\sigma^2$, and suppose $X^s$, have the size bias distribution of $X$, may be coupled to $X$ so that $|X^s-X| \leq A$, for some $A$. Then with $W = (X- \mu)/ \sigma$,

\begin{equation}\label{Kolmogorov}
d_{Kol}(W,Z) \leq   \frac{6 \mu A^2}{\sigma^3} +\frac{2 \mu \Psi}{\sigma^2},
\end{equation}
where $Z$ is a standard Gaussian random variable, and $\Psi$ is given by 
\begin{equation}\label{Psi}
\Psi = \sqrt{\mathrm{Var}(\mathbb{E}[X^s-X | X])}
\end{equation}

\end{teo}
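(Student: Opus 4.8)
The plan is to reproduce the standard Stein's method argument for size bias couplings, tailored to the Kolmogorov distance. The engine is the characterizing equation of the standard normal: for fixed $z$, let $f_z$ denote the bounded solution of the Stein equation $f'(w) - w f(w) = \mathbf{1}_{\{w \le z\}} - \Phi(z)$, where $\Phi$ is the standard normal distribution function. I would begin by recording the classical bounds on this solution, namely $0 < f_z \le \sqrt{2\pi}/4$, $|f_z'| \le 1$, and $|(w f_z(w))'| \le 1$, together with the observation that $f_z'$ carries a single downward jump of size $1$ at $w=z$ coming from the indicator. Writing $W = (X-\mu)/\sigma$, the whole problem reduces to estimating $\E[f_z'(W) - W f_z(W)]$ uniformly in $z$.

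The second step feeds the size bias structure into the term $\E[W f_z(W)]$. From the defining property $\E[Xg(X)] = \mu\,\E[g(X^s)]$ applied to $g(x) = f_z((x-\mu)/\sigma)$, one obtains the identity $\E[W f_z(W)] = (\mu/\sigma)\,\E[f_z(W^s) - f_z(W)]$, where $W^s = (X^s-\mu)/\sigma$. Setting $\Delta = W^s - W = (X^s-X)/\sigma$, so that $|\Delta| \le A/\sigma$ by hypothesis, I would use the integral representation $f_z(W^s) - f_z(W) = \int_0^\Delta f_z'(W+t)\,dt$ and split $f_z'(W+t) = f_z'(W) + (f_z'(W+t) - f_z'(W))$. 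The leading piece collapses, via $\int_0^\Delta dt = \Delta$, to $(\mu/\sigma)\,\E[f_z'(W)\Delta]$, and since $\E[X^s - X] = \sigma^2/\mu$ one checks that $T := (\mu/\sigma)\Delta$ has mean $1$. Hence the combination $\E[f_z'(W)] - (\mu/\sigma)\E[f_z'(W)\Delta] = \E[f_z'(W)(1-T)]$, and conditioning on $W$ (equivalently on $X$), using $|f_z'| \le 1$ and Cauchy--Schwarz, bounds this by $\sqrt{\mathrm{Var}(\E[T\mid X])} = (\mu/\sigma^2)\,\Psi$. This is precisely where the factor $\Psi$ enters.

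Third, I would control the remainder $(\mu/\sigma)\int \E[(f_z'(W+t)-f_z'(W))\,\widehat{\mathbf{1}}(t,\Delta)]\,dt$, where $\widehat{\mathbf{1}}(t,\Delta)$ is the signed indicator of $t$ lying between $0$ and $\Delta$. Because $f_z$ is not twice differentiable, I cannot Taylor expand; instead I split $f_z'(w) = (w f_z(w)) + \mathbf{1}_{\{w \le z\}} - \Phi(z)$ into a Lipschitz part and a jump part. The Lipschitz part obeys $|(W+t)f_z(W+t) - W f_z(W)| \le |t| \le |\Delta|$, so its contribution is at most $(\mu/\sigma)\,\E[\Delta^2/2] \le \mu A^2/(2\sigma^3)$. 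The jump part reduces, after integrating out $t$, to a bound of the form $(\mu A/\sigma^2)\,\Pro(z - A/\sigma \le W \le z + A/\sigma)$.

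The main obstacle is this last quantity: controlling the mass $W$ places on a short interval of length $2A/\sigma$. Comparing it naively to the Gaussian reintroduces the very Kolmogorov distance we are trying to bound, producing a circular inequality with an uncontrolled coefficient. The remedy, which I would carry out as a standalone concentration lemma, is to run the same size bias identity against a piecewise linear (ramp) test function $h$ whose derivative is the indicator of the interval; controlling $\E[Wh(W)]$ on one side and relating $\int_0^\Delta h'(W+t)\,dt$ to $\mathbf{1}_{\{a \le W \le b\}}$ through $\E[\Delta\mid X]$ on the other yields a bound on $\Pro(z - A/\sigma \le W \le z + A/\sigma)$ directly in terms of $A/\sigma$ and $\Psi$, with no reference to $d_{Kol}$. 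Combining the concentration estimate with the two preceding bounds, collecting all the $A^2/\sigma^3$ contributions into the constant $6$ and the two $\Psi$ contributions into the constant $2$, and taking the supremum over $z$, delivers $d_{Kol}(W,Z) \le 6\mu A^2/\sigma^3 + 2\mu\Psi/\sigma^2$. I expect the bookkeeping of the constants inside the concentration lemma, rather than any conceptual point, to be the most delicate part.
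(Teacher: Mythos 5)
The paper offers no internal proof of this theorem: it is imported verbatim as Theorem 5.3 of \cite{chen2011normal}, so the only comparison available is with the proof in that reference, which your sketch essentially reconstructs. Your outline is correct and follows the same route — the classical bounds on the Stein solution $f_z$, the size-bias identity $\E[Wf_z(W)] = (\mu/\sigma)\,\E[f_z(W^s)-f_z(W)]$, the mean-one variable $T=\mu(X^s-X)/\sigma^2$ whose conditional fluctuation produces $\Psi$ via Cauchy--Schwarz, the splitting of $f_z'$ through the Stein equation into a Lipschitz part and a jump part, and, crucially, the standalone ramp-function concentration inequality bounding $\Pro(z-A/\sigma \le W \le z+A/\sigma)$ in terms of $A/\sigma$ and $\Psi$ without circular recourse to $d_{Kol}$ — so your proposal matches the source's argument, with only the constant bookkeeping left implicit.
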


\subsection{Random Trees}

The main object of study in this paper is the uniform random labelled tree. We will be interested in the number of crossing when the vertices are in convex position. In order to get that information, we need the probabilities of certain configurations having such crossing. This  translates in the probability of having certain forests as subgraphs.

Let $T_n$ be a uniform random labelled tree on $n$ vertices, that is a tree chosen uniformly at random over the set of $n^{n-2}$ possibles trees on $n$ vertices with its vertices labelled. Using Pitman's techniques (see \cite{MR537284}) one can see that the probability that a random labelled tree contains certain forest $t_1, t_2, \ldots, t_n$ is given by\begin{equation} \label{eq:probaofsubtrees}\mathbb{P}(t_1,t_2, \ldots, t_n) = \frac{1}{n^{\mathcal{E}}} \prod_{i=1}^n v_i,\end{equation}
where $\mathcal{E}$ is the number of edges of the forest $t_1, t_2, \ldots, t_n$, and $v_i$ is the number of vertices of each subtree $t_i$. In particular, this shows that containing subtrees which are disjoint are independent events.

\section{Proofs}

Let $T_n$ be a uniform random labelled tree with its vertices in convex position on the set $[n]= \{1, 2, \ldots, n\}$, and we denote by $X_n := X_n(T_n)$ the random variable that counts the number of crossing of $T_n$. In this case, we write $X_n$ as a sum of $\binom{n}{4}$ Bernoulli variables corresponding to the different possible crossings. That is,
\begin{equation}
\label{crossing_number}
X_n = \sum_{1 \leq a <b < c< d\leq n} \mathbb{I}_{a \sim c} \mathbb{I}_{b \sim d} = \sum_{j \in \binom{n}{4}} \mathbb{I}_{ \{T_n \text{ has a crossing in } j\} }= \sum_{j \in \binom{n}{4}} Y_j.
\end{equation}

\subsection{Mean and Variance of $X_n$}

Directly from \eqref{eq:probaofsubtrees}, we notice $Y_j$ is a Bernoulli random variable having probability of success
$$\mathbb{P}(Y_j = 1) = \frac{4}{n^2},$$
from where we obtain that 
\begin{equation}\label{valoresperadocruces}
\mathbb{E}(X_n) = \binom{n}{4} \frac{4}{n^2}= \frac{(n-1)(n-2)(n-3)}{6n} \sim \frac{n^2}{6}.
\end{equation}
One can further calculate the variance of the number of crossings $X_n$, by expanding $X_n^2$ as a double sum of crossings and splitting by cases to obtain (see  \cite{arizmendi2019number}),
\begin{equation}\label{varianzacruces}
\mathrm{Var}(X_n) = \frac{n^3}{45} - \frac{3n^2}{40} - \frac{17n}{72} +\frac{35}{24}- \frac{1003}{360n} +\frac{157}{60n^2} - \frac{1}{n^3} \sim \frac{n^3}{45}.
\end{equation}

\subsection{Size Bias Transform for the Number of Crossings}

Let $X_n^s$ having the size bias distribution of the number of crossings $X_n$ defined in \eqref{crossing_number}. By Corollary \ref{corollary_sizebiasBernoulli}, we have that $X_n^s$ is given by
$$X_n^s = \sum_{j \in \binom{n}{4}} Y_j^{(I)},$$
where $I$ is a random index choose with $\mathbb{P}(I = i) = 1 /  \binom{n}{ 4}$ independent of  $\{Y_j \}$ and the distribution of $(Y_j^i)$ is given by $Y_i^i=1 $ and $(Y_j^i)_{j\neq i} \overset{(d)}{=}(Y_j)_{j \neq i} |\{Y_i = 1\}$.
With the previous construction, we have that  $Y_j^{(I)} = Y_j$ if $Y_I \perp Y_j$, which follows if  $I \neq j$, because disjoints trees are independent, so 
$$|X_n - X_n^s | = |\hat{X}_n - \hat{X}_n^s|$$
with
\begin{equation}\label{Xhat}
\hat{X}_n = \sum_{j \in \mathcal{C_I}} Y_j, \qquad \hat{X}_n = \sum_{j \in \mathcal{C_I}} Y_j^{(I)},
\end{equation}
where $\mathcal{C}_I$ is the set of crossings that are not disjoint with  $I$.
The number of crossings which are independent of a crossing $j$ is  $\binom{n-4}{4}$, which is the number of crossings that we can make with $n-4$ vertices. Thus, we have that for any crossing, 
\begin{align*}
|\mathcal{C}_I| &= \binom{n}{4} - \binom{n-4}{4} \\
&= \frac{n(n-1)(n-2)(n-3)}{24} - \frac{(n-4)(n-5)(n-6)(n-7)}{24} \\
&= \frac{2}{3}n^3 -7 n^2 +\frac{79}{3} n -35 := c_n.
\end{align*}
Moreover,  $j \in \mathcal{C}_i \Longleftrightarrow i \in \mathcal{C}_j$. Thus that define a symmetric (but not transitive) relation: $i \overset{\mathcal{C}}{\sim} j$ if $i \in \mathcal{C}_j$ (or $j \in \mathcal{C}_i$).

Finally, we explicitly construct the size bias transform $X^s$. Given a tree $T$, choose a crossing $I= (a,b,c,d)$ uniformly at random from $\binom{n}{4}$, independent of $T$.  We construct the tree $T^s$ as follows:

\begin{itemize}
\item If $I$ is a crossing of $T$, that is $Y_I = 1$, set $T = T^s$.
\item If $I$ is not a crossing of $T$, we consider the unique path $P_{ac}$ between $a$ and $c$ and we form a cycle putting the edge $ac$. Then we erase one of the edges incident to the vertices $a$ or $c$ in $P_{ac}$ with equal probability. We do the same process for the vertices $b$ and $d$. We set $T^s$ as the resulting tree.
\end{itemize}
With this construction, we have that $T^s$ is a tree with a crossing in the index $I$. $X^s$ is then the number of crossings considering $T^s$, instead of $T$.

\subsection{Bounding the Conditional Variance}

Finally,  our goal is to find a upper bound for the variance of the random variable given by the conditional expectation,  $\mathbb{E}[X_n^s-X_n |X_n]$.  First, we notice that 

\begin{align*}
\mathbb{E}[X_n^s - X_n | X_n] &=  \sum_{i \in \binom{n}{4} } \mathbb{E}[X_n^s - X_n | X_n, I = i] \mathbb{P}(I = i) \\
&= \frac{1}{\binom{n}{4}} \sum_{i \in \binom{n}{4}} (X_n^{(i)} - X_n),
\end{align*}
where $X_n^{(i)}$ denote $X_n^s$ conditioned to have a crossing in the index $i$. This gives that 
$$\mathrm{Var}(\mathbb{E}[X_n^s- X_n|X_n]) = \frac{1}{\binom{n}{4}^2} \sum_{i,j} \mathrm{Cov}(X_n^{(i)}-X_n, X_n^{(j)}- X_n). $$

To bound such covariances we will use the following lemma (see Lemma 2.6 in \cite{paguyo2021convergence} or Lemma 5.1 in \cite{he2022central}). We give a proof for he convenience of the reader.

\begin{lem}\label{cov}
Let $X$ and $Y$ be random variables with $|X| \leq C_1$ and $|Y| \leq C_2$. Let $A$ be some event such that conditional on $A$, $X$ and $Y$ are uncorrelated. Then 
$$|\mathrm{Cov}(X,Y)| \leq 4C_1C_2 \mathbb{P}(A^c).$$ 
\end{lem}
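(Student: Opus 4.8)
The plan is to prove this covariance bound directly from the definition of covariance, exploiting the decomposition of the probability space according to whether the event $A$ occurs. First I would write the covariance as a weighted combination of conditional covariances. Specifically, using the law of total expectation, $\mathrm{Cov}(X,Y) = \mathbb{E}[XY] - \mathbb{E}[X]\mathbb{E}[Y]$, and I would expand $\mathbb{E}[XY] = \mathbb{E}[XY \mid A]\mathbb{P}(A) + \mathbb{E}[XY \mid A^c]\mathbb{P}(A^c)$, and similarly for $\mathbb{E}[X]$ and $\mathbb{E}[Y]$. The hypothesis that $X$ and $Y$ are uncorrelated conditional on $A$ means $\mathbb{E}[XY \mid A] = \mathbb{E}[X \mid A]\,\mathbb{E}[Y \mid A]$, which is the key algebraic fact that makes the $\mathbb{P}(A)$-weighted terms collapse.

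The main computation is then to substitute this conditional independence and collect terms so that every surviving summand carries a factor of $\mathbb{P}(A^c)$. Writing $p = \mathbb{P}(A)$ and $q = \mathbb{P}(A^c) = 1-p$, one obtains an expression for $\mathrm{Cov}(X,Y)$ as a polynomial in the conditional expectations $\mathbb{E}[X\mid A]$, $\mathbb{E}[Y\mid A]$, $\mathbb{E}[X\mid A^c]$, $\mathbb{E}[Y\mid A^c]$, $\mathbb{E}[XY\mid A^c]$, multiplied by various products of $p$ and $q$. The goal is to verify that after using $p = 1-q$ and simplifying, each remaining term is a multiple of $q$ (with the $q^0$ terms cancelling precisely because of the conditional uncorrelatedness). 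Then I would apply the uniform bounds: each conditional expectation of $X$ is bounded in absolute value by $C_1$, each of $Y$ by $C_2$, and $\mathbb{E}[XY\mid A^c]$ by $C_1 C_2$, so every term is at most a constant times $C_1 C_2$ in magnitude.

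The final step is bookkeeping: count the number of terms and their coefficients to confirm the total bound is $4 C_1 C_2\, \mathbb{P}(A^c)$. I would expect a handful of terms each bounded by $C_1 C_2 \, q$ or $C_1 C_2\, q^2 \le C_1 C_2\, q$ (using $q \le 1$), and the constant $4$ should emerge as a generous but clean upper bound on the sum of the coefficients.

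The only genuinely delicate point is the algebraic cancellation in the second step: one must be careful that the terms of order $q^0$ — which come from pretending $A$ occurs with probability one — cancel exactly, leaving no constant term independent of $q$. This is precisely where the hypothesis $\mathbb{E}[XY\mid A] = \mathbb{E}[X\mid A]\mathbb{E}[Y\mid A]$ is used, and it is the heart of the lemma. Everything else is routine estimation via the triangle inequality and the uniform bounds on $X$ and $Y$. I do not expect this to be hard, but the explicit expansion must be carried out carefully to land on the stated constant $4$ rather than a larger one.
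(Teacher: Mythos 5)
Your plan is correct, and it is genuinely a different route from the paper's --- in fact a more careful one. The paper centers $X$ and $Y$ at their \emph{unconditional} means, splits $\mathbb{E}[(X-\mu_X)(Y-\mu_Y)]$ over $\mathbb{I}_A$ and $\mathbb{I}_{A^c}$, and asserts that the $\mathbb{I}_A$ term vanishes. That step does not follow from the hypothesis as stated: conditional uncorrelatedness gives $\mathbb{E}[XY\mid A]=\mathbb{E}[X\mid A]\,\mathbb{E}[Y\mid A]$, whence
\[
\mathbb{E}\bigl[(X-\mu_X)(Y-\mu_Y)\mathbb{I}_A\bigr]=\mathbb{P}(A)\bigl(\mathbb{E}[X\mid A]-\mu_X\bigr)\bigl(\mathbb{E}[Y\mid A]-\mu_Y\bigr),
\]
which is nonzero in general (take $X=Y$ equal to $1$ on $A$ and $-1$ on $A^c$). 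Your expansion into conditional expectations only cancels the terms that genuinely cancel, so it avoids this pitfall and, once written out, actually repairs the paper's argument.

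The one place you are too optimistic is the final bookkeeping. Writing $p=\mathbb{P}(A)$, $q=\mathbb{P}(A^c)$, $a_X=\mathbb{E}[X\mid A]$, $b_X=\mathbb{E}[X\mid A^c]$ (and likewise for $Y$), your expansion gives
\[
\mathrm{Cov}(X,Y)=pq\,a_Xa_Y+q\,\mathbb{E}[XY\mid A^c]-pq\,a_Xb_Y-pq\,b_Xa_Y-q^2\,b_Xb_Y,
\]
and bounding the five surviving terms separately by $C_1C_2\,q$ each yields the constant $5$, not $4$. Since the two-point example above shows that $4$ is asymptotically sharp as $q\to 0$, there is no slack for a lossy count; you must group the terms, e.g.
\[
\mathrm{Cov}(X,Y)=pq\,a_X\,(a_Y-b_Y)+q\Bigl(\mathbb{E}[XY\mid A^c]-b_X\,\mathbb{E}[Y]\Bigr),
\]
after which $|a_Y-b_Y|\le 2C_2$ and $\bigl|\mathbb{E}[XY\mid A^c]-b_X\mathbb{E}[Y]\bigr|\le 2C_1C_2$ give exactly $4C_1C_2\,\mathbb{P}(A^c)$. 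With that grouping made explicit your proof is complete; without it you prove the lemma only with a worse constant (which, to be fair, would still suffice for the application in the paper).
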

\begin{proof}
Let $\mu_X$ and $\mu_Y$ be the mean of $X$ and $Y$ respectively, then $|X-\mu_X| \leq 2C_1$ and $|Y-\mu_Y| \leq 2 C_2$. Now, if $A$ is a event such that $\mathbb{P}(A)>0$ and conditional on $A$, $X$ and $Y$ are uncorrelated, we obtain 
\begin{align*}
|\mathrm{Cov}(X,Y)| &= |\mathbb{E}[(X-\mu_X)(Y-\mu_Y)]| \\
&= |\mathbb{E}[(X-\mu_X)(Y-\mu_Y)\mathbb{I}_{A}] + \mathbb{E}[(X-\mu_X)(Y-\mu_Y)\mathbb{I}_{A^c}|]  \\
&=  |\mathbb{E}(X-\mu_X)(Y-\mu_Y)\mathbb{I}_{A^c}|\\
&\leq \mathbb{E}[|X-\mu_X||Y-\mu_Y|\mathbb{I}_{A^c}]\\ 
&\leq 4C_1C_2\mathbb{E}\mathbb{I}_{A^c}\\ 
&=4C_1C_2 \mathbb{P}(A^c).
\end{align*}
\end{proof}

Now, notice that $|X_n^{(i)} - X_n| \leq 4(n-3)$, because in order to obtain $X_n^{(i)}$ we want to have an specific crossing, and each edge introduces at most $n-3$ crossings. Now, we can identify two kinds of terms in the summation of the covariance: when the indices satisfy $|i \cap j| \neq0$ or when they satisfy $|i \cap j| =0$.

\begin{itemize}
\item \textbf{Case $|i \cap j| \neq 0$:} In the case we can have intersection of one, two, three or four vertices, we have 
$$\binom{n}{4} \left[ \binom{n-4}{3} + \binom{n-4}{2}+ \binom{n-4}{1} + 1 \right] \sim n^7$$
terms. Additionally, we have that for any index $i$,
$$\mathrm{Var}(X_n^{(i)}-X_n) \leq \mathbb{E}(X_n^{(i)}-X_n)^2 \leq 16(n-3)^2,$$
it follows that
$$|\mathrm{Cov}(X_n^{(i)}-X_n, X_n^{(j)}-X_n) \leq \sqrt{\mathrm{Var}(X_n^{(i)}-X_n)\mathrm{Var}(X_n^{(j)}-X_n)} \leq 16(n-3)^2.$$
Thus, the contribution of the terms $|i \cap j| \neq \emptyset$ is bounded by $O(n^7 n^2) = O(n^9)$.
\item \textbf{Case $|i \cap j | = 0$:} In this case we have $\binom{n}{4} \binom{n-4}{4} \sim n^8$ terms in the summation of the covariance. Let $A$ be the event in which the neighbours of the index are disjoints, that is 
$$A = \{ C_i \cap C_j  = \emptyset\}.$$
We have that conditional on the event $A$, $X_n^{(i)}-X_n$ and $X_n^{(j)} - X_n$ are independent. Indeed, 
$$X_n^{(i)}- X_n = \sum_{k \in C_i}(Y_k^{(i)}-Y_k),\quad \text{ and } \quad X_n^{(j)}- X_n = \sum_{l \in C_j}(Y_l^{(j)}-Y_l),$$
so each random variable depends only of the crossings $C_i$ and $C_j$ respectively, thus they are independent. 
Now, we have that if $k \in C_i \cap C_j$ then $i,j \in C_k$, this follows using the fact that $k \in C_i  \Leftrightarrow i \in C_k$, therefore in the set $A^c$ there are the crossings such that have vertices of $i$ and $j$. With this, we have 
$$\mathbb{P}(A^{c}) = \mathbb{P}(\{ k :  i,j \in C_k\}) \leq \mathbb{P}\left( \bigcup_{u \in i ,  v \in j  } \{u \sim v\} \right) \leq \sum_{u\in i, v \in j} \mathbb{P}(u \sim v) = \frac{32}{n}.$$

Using the Lemma \ref{cov}, 
$$|\mathrm{Cov}(X_n^{(i)}- X_n, X_n^{(j)}- X_n)| \leq 4(4(n-3))^2 \frac{32}{n} = \frac{2048(n-3)^2}{n},$$
thus, the contribution of this terms are bounded by $O(n^8 n) = O(n^9)$.
\end{itemize}
As a result, we obtain that 
$$\mathrm{Var}(\mathbb{E}[X_n^s- X_n|X_n]) = \frac{1}{\binom{n}{4}^2} \sum_{i,j} \mathrm{Cov}(X_n^{(i)}-X_n, X_n^{(j)}- X_n) \leq c n $$
for some constant $c$ independent of $n$. Thus, we proved the following

\begin{lem}
Let $X_n$ be the number crossing of a random labelled tree in convex position with $n$ vertices, and let $X_n^s$ have the size bias distribution with respect to $X_n$. Then
\begin{equation}\label{variance_bound}
\mathrm{Var}(\mathbb{E}[X_n^s- X_n|X_n]) \leq c n,
\end{equation}
where $c$ is a constant independent of $n$.
\end{lem}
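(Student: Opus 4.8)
The plan is to make the conditional expectation explicit, reduce its variance to a double sum of covariances indexed by pairs of crossings, and then control those covariances by splitting according to whether the two four-element index sets share a vertex. First I would condition on the uniformly chosen index $I$ to obtain
\begin{equation*}
\mathbb{E}[X_n^s - X_n \mid X_n] = \frac{1}{\binom{n}{4}} \sum_{i \in \binom{n}{4}} (X_n^{(i)} - X_n),
\end{equation*}
where $X_n^{(i)}$ denotes $X_n^s$ conditioned to have a crossing at the index $i$. Expanding the square then gives
\begin{equation*}
\mathrm{Var}(\mathbb{E}[X_n^s - X_n \mid X_n]) = \frac{1}{\binom{n}{4}^2} \sum_{i,j} \mathrm{Cov}(X_n^{(i)} - X_n, X_n^{(j)} - X_n).
\end{equation*}
Since $\binom{n}{4}^2$ is of order $n^8$, it suffices to prove that the full double sum is $O(n^9)$, which would immediately yield the claimed bound $cn$.

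The a priori estimate I would rely on is $|X_n^{(i)} - X_n| \leq 4(n-3)$: forcing a crossing at $i$ only modifies the four edges lying on the two relevant paths, and each edge takes part in at most $n-3$ crossings. With this in hand I would split the pairs $(i,j)$ according to $|i \cap j|$. When $|i \cap j| \neq 0$ there are only of order $n^7$ such pairs, and Cauchy--Schwarz together with the a priori bound gives $|\mathrm{Cov}(X_n^{(i)} - X_n, X_n^{(j)} - X_n)| \leq 16(n-3)^2 = O(n^2)$ for each; the total contribution of this case is thus $O(n^7 \cdot n^2) = O(n^9)$, which is within budget.

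The disjoint case $|i \cap j| = 0$ is where the real work lies, and I expect it to be the main obstacle: it contains of order $n^8$ pairs, so the crude $O(n^2)$ covariance bound would only give $O(n^{10})$, one power of $n$ too many. The crucial point is that $X_n^{(i)} - X_n = \sum_{k \in \mathcal{C}_i}(Y_k^{(i)} - Y_k)$ depends only on the crossings that meet $i$, and symmetrically for $j$; hence, on the event $A = \{\mathcal{C}_i \cap \mathcal{C}_j = \emptyset\}$ these two differences are independent, in particular uncorrelated. Lemma \ref{cov} then bounds the covariance by $4(4(n-3))^2 \mathbb{P}(A^c)$, so everything reduces to showing $\mathbb{P}(A^c) = O(1/n)$. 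To estimate $\mathbb{P}(A^c)$ I would observe that any crossing $k \in \mathcal{C}_i \cap \mathcal{C}_j$ forces $i,j \in \mathcal{C}_k$, i.e. some vertex of $i$ is adjacent to some vertex of $j$; a union bound over the $16$ pairs $(u,v)$ with $u \in i$ and $v \in j$, using $\mathbb{P}(u \sim v) = 2/n$ from \eqref{eq:probaofsubtrees}, gives $\mathbb{P}(A^c) \leq 32/n$. This recovers the missing factor $1/n$, so each disjoint covariance is $O(n)$ and the disjoint contribution is $O(n^8 \cdot n) = O(n^9)$. Combining the two cases and dividing by $\binom{n}{4}^2$ yields $\mathrm{Var}(\mathbb{E}[X_n^s - X_n \mid X_n]) \leq cn$, as required.
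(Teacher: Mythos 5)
Your proposal is correct and follows essentially the same route as the paper: the same reduction of the variance to a double sum of covariances over pairs of crossings, the same split according to $|i\cap j|$, the same Cauchy--Schwarz bound for intersecting pairs, and the same use of Lemma \ref{cov} with the event $A=\{\mathcal{C}_i\cap\mathcal{C}_j=\emptyset\}$ and the union bound $\mathbb{P}(A^c)\leq 32/n$ for disjoint pairs. No substantive differences to report.
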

\begin{remark}
    The constant $c$ in the above lemma could be taken to be  $2112$.
\end{remark}

\section{Rate of Convergence}

\subsection{Kolmogorov Distance}

Using the previous result, we obtain the hypothesis of Theorem \ref{teoKolmo}. Therefore, we find the convergence rates for the asymptotic normality of the number of crossings in a random labelled tree. 

\begin{teo}
\label{Th3}Let $X_n$ be the number of crossings of a random labelled tree in convex position with $n$ vertices. Let $\mu_n $ and $\sigma_n^2$  be the mean and the variance of $X_n$. Then, with $W_n = (X_n - \mu_n)/ \sigma_n$, 
\begin{equation}\label{rateKolmo}
d_{Kol}(W_n, Z) \leq \frac{C}{\sqrt{n}},
\end{equation}	
where $Z$ is a standard Gaussian random variable and $C$ is a constant independent of $n$.
\end{teo}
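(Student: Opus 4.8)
The plan is to apply Theorem \ref{teoKolmo} directly, since all of its hypotheses have now been assembled. The size bias coupling $(X_n, X_n^s)$ has been constructed explicitly, and I must supply three ingredients to the bound \eqref{Kolmogorov}: the mean $\mu_n$, the variance $\sigma_n^2$, and a deterministic bound $A$ on $|X_n^s - X_n|$, together with the conditional variance estimate $\Psi^2 = \mathrm{Var}(\mathbb{E}[X_n^s - X_n \mid X_n])$ already proved in \eqref{variance_bound}.

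First I would record the asymptotics established earlier: from \eqref{valoresperadocruces} we have $\mu_n \sim n^2/6$, so $\mu_n = \Theta(n^2)$, and from \eqref{varianzacruces} we have $\sigma_n^2 \sim n^3/45$, so $\sigma_n = \Theta(n^{3/2})$ and $\sigma_n^3 = \Theta(n^{9/2})$. Next I would fix the coupling bound: since the construction of $X_n^s$ modifies at most a bounded number of edges and each edge creates or destroys at most $n-3$ crossings, one has $|X_n^s - X_n| \leq A$ with $A = O(n)$; indeed the bound $|X_n^{(i)} - X_n| \leq 4(n-3)$ used in the conditional variance computation gives $A = 4(n-3)$, so $A^2 = O(n^2)$. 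Finally, from the Lemma culminating in \eqref{variance_bound}, $\Psi = \sqrt{\mathrm{Var}(\mathbb{E}[X_n^s - X_n \mid X_n])} \leq \sqrt{cn} = O(n^{1/2})$.

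With these in hand the two terms of \eqref{Kolmogorov} are estimated separately. The first term is
\begin{equation*}
\frac{6\mu_n A^2}{\sigma_n^3} = \frac{O(n^2)\cdot O(n^2)}{\Theta(n^{9/2})} = O(n^{-1/2}),
\end{equation*}
and the second term is
\begin{equation*}
\frac{2\mu_n \Psi}{\sigma_n^2} = \frac{O(n^2)\cdot O(n^{1/2})}{\Theta(n^3)} = O(n^{-1/2}).
\end{equation*}
Adding the two contributions yields $d_{Kol}(W_n, Z) \leq C/\sqrt{n}$ for a constant $C$ independent of $n$, which is exactly \eqref{rateKolmo}.

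There is no genuine obstacle remaining: the entire difficulty of the paper has already been discharged in the conditional variance bound \eqref{variance_bound}, whose proof required the careful covariance splitting into the cases $|i\cap j|\neq 0$ and $|i\cap j|=0$ and the application of Lemma \ref{cov}. The only point demanding minor care here is bookkeeping of the powers of $n$ so that both terms genuinely decay at rate $n^{-1/2}$ rather than faster or slower; in particular one should note that it is the balance between $A^2 = O(n^2)$ against $\sigma_n^3 = \Theta(n^{9/2})$ in the first term, and $\Psi = O(n^{1/2})$ against $\sigma_n^2 = \Theta(n^3)$ in the second, that makes both terms exactly of order $n^{-1/2}$, so that neither dominates and the stated rate is sharp for this method.
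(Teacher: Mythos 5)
Your proposal is correct and follows essentially the same route as the paper: it applies Theorem \ref{teoKolmo} with the coupling bound $A = 4(n-3)$, the asymptotics $\mu_n \sim n^2/6$ and $\sigma_n^2 \sim n^3/45$, and the estimate $\Psi \leq \sqrt{cn}$ from \eqref{variance_bound}, and then checks that both terms of \eqref{Kolmogorov} are $O(n^{-1/2})$. The paper's own proof is exactly this computation, so there is nothing to add.
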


\begin{proof}
By construction we have that $|X_n^s - X_n | \leq 4(n-3)$. Also by \eqref{valoresperadocruces} and \eqref{varianzacruces}, 
$$\mu_n \sim n^2/6, \qquad \sigma_n^2 \sim n^3/45.$$
By Lemma \ref{rateKolmo}, 
$$\psi = \sqrt{\mathrm{Var}(\mathbb{E}(X_n^s-X_n |X_n))} \leq  \sqrt{cn},$$
then using Theorem \ref{teoKolmo}, 
\begin{align*}
d_{Kol}(W_n, Z) &\leq   \frac{6 \mu A^2}{\sigma^3} +\frac{2 \mu \Psi}{\sigma^2}\\ 
&\leq \frac{16n^2 (n-3)^2 }{\left(\sigma^2\right)^{3/2}} + \frac{\frac{n^2}{3}  \sqrt{cn}}{\sigma^2}\\
&\leq \frac{16n^4}{\left(\sigma^2\right)^{3/2}} + \frac{\sqrt{c} n^{5/2}}{3 \sigma^2} \\
&\leq \frac{C}{\sqrt{n}}.
\end{align*} 
\end{proof}

\subsection{Other Distances}
In the case that a bounded size bias coupling exists, it is possible to obtain bounds for a distances defined as supremums over functions classes, in which a Kolmogorov distance is a particular case. Following the Section 5.4 of \cite{chen2011normal}, we need the following condition over the class of functions.

\begin{con}\label{condition}$\mathcal{H}$ is a class of real valued measurable functions on $\mathbb{R}$ such that
\begin{enumerate}
\item The functions $h \in \mathcal{H}$ are uniformly bounded in absolute value by 1.
\item For any $c,d \in \mathbb{R}$ and $h(x) \in \mathcal{H}$, the function $h(cx+d) \in \mathcal{H}$.
\item For any $\epsilon >0$ and $h \in \mathcal{H}$, the functions $h_\epsilon^{+}, h_{\epsilon}^{-}$ are also in $\mathcal{H}$, and 
$$\mathbb{E} [h_\epsilon^{+}(Z)- h_{\epsilon}^{-}(Z)]  \leq a\epsilon,$$
for some constant $a$ that depends only of the class $\mathcal{H}$, where
$$h_\epsilon^{+}(x) = \sup_{|y| \leq \epsilon} h(x+y), \qquad h_{\epsilon}^{-}(x) =\inf_{|y| \leq \epsilon} h(x+y).$$
\end{enumerate}
\end{con}
Given a class $\mathcal{H}$ and random variables $X$ and $Y$,  let
$$\parallel \mathcal{L}(X) -   \mathcal{L}(Y) \parallel_{\mathcal{H}} = \sup_{h \in \mathcal{H}} | \mathbb{E}h(X)- \mathbb{E}h(Y)|.$$
A direct application of Theorem 5.8 from \cite{chen2011normal} in our case give the following result.

\begin{teo}
\label{Th4}
Let $X_n$ be the crossing number of a random labelled tree in convex position with $n$ vertices. Let $\mu_n $ and $\sigma_n^2$  the mean and the variance of $X_n$. Then for some class $\mathcal{H}$ that satisfy Condition \ref{condition} for some constant $a$, with $W_n = (X_n - \mu_n)/ \sigma_n$, 
\begin{equation}
\parallel \mathcal{L}(W_n) -   \mathcal{L}(Z) \parallel_{\mathcal{H}} \leq \frac{C_a}{\sqrt{n}},
\end{equation}	
where $Z$ is a standard Gaussian random variable and $C_a$ is a constant independent of $n$.
\end{teo}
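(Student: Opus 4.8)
The plan is to observe that Theorem \ref{Th4} is essentially a packaging of the work already done for Theorem \ref{Th3}: the real analytic content---the bounded size bias coupling and the bound on the conditional variance---has been established, and what remains is to invoke the appropriate general result from \cite{chen2011normal} (their Theorem 5.8) that upgrades a bounded coupling into a bound over an arbitrary class $\mathcal{H}$ satisfying Condition \ref{condition}. Thus the proof is almost entirely a matter of verifying that the hypotheses of that general theorem hold with the right orders of magnitude in $n$, and then reading off the resulting estimate.

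First I would recall the two structural inputs proved earlier. From the explicit construction of $T^s$ we have the bounded coupling $|X_n^s - X_n| \leq 4(n-3)$, so we may take $A = 4(n-3)$. From Lemma \ref{variance_bound} we have $\Psi = \sqrt{\mathrm{Var}(\mathbb{E}[X_n^s - X_n \mid X_n])} \leq \sqrt{cn}$. Finally, from \eqref{valoresperadocruces} and \eqref{varianzacruces} we have the asymptotics $\mu_n \sim n^2/6$ and $\sigma_n^2 \sim n^3/45$. These are exactly the quantities that appear in the general bound.

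Next I would write down the conclusion of Theorem 5.8 of \cite{chen2011normal} for a bounded size bias coupling. That theorem yields a bound of the same shape as \eqref{Kolmogorov}, namely a constant times $\mathcal{H}$-dependent factor multiplying $\mu_n A^2 / \sigma_n^3 + \mu_n \Psi / \sigma_n^2$, the two terms being precisely those already controlled in the proof of Theorem \ref{Th3}. Substituting the orders $\mu_n = O(n^2)$, $A = O(n)$, $\Psi = O(\sqrt{n})$ and $\sigma_n^2 = O(n^3)$ gives
\begin{equation*}
\frac{\mu_n A^2}{\sigma_n^3} = O\!\left(\frac{n^2 \cdot n^2}{n^{9/2}}\right) = O(n^{-1/2}), \qquad \frac{\mu_n \Psi}{\sigma_n^2} = O\!\left(\frac{n^2 \cdot n^{1/2}}{n^3}\right) = O(n^{-1/2}),
\end{equation*}
so both terms are $O(n^{-1/2})$. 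The class-dependent constant $a$ in Condition \ref{condition} is absorbed into the final constant $C_a$, which depends on $\mathcal{H}$ but not on $n$. This reproduces the bound $\|\mathcal{L}(W_n) - \mathcal{L}(Z)\|_{\mathcal{H}} \leq C_a n^{-1/2}$ and completes the argument.

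The step requiring the most care---though it is verification rather than genuine difficulty---is checking that our setup truly satisfies the hypotheses of Theorem 5.8 as stated in \cite{chen2011normal}, in particular that the coupling is of the correct one-sided or monotone type that their theorem may demand, and that the smoothing inequality in part (3) of Condition \ref{condition} interacts correctly with the Gaussian smoothing used in their proof. Since the Kolmogorov distance (the class of indicators of half-lines) is already the hardest such class and was handled in Theorem \ref{Th3}, I expect no new obstacle to arise; the present theorem simply records that the same estimate extends verbatim to any class meeting Condition \ref{condition}, with the constant now allowed to depend on the smoothing parameter $a$ of that class.
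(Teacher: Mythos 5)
Your proposal matches the paper exactly: the paper offers no separate argument for this theorem beyond stating that it is ``a direct application of Theorem 5.8 from \cite{chen2011normal},'' using the same inputs you list ($A = 4(n-3)$, $\Psi \leq \sqrt{cn}$, and the asymptotics of $\mu_n$ and $\sigma_n^2$), and your order-of-magnitude check reproducing the two $O(n^{-1/2})$ terms is the same computation carried out in the proof of Theorem \ref{Th3}. Your closing caveat about verifying the hypotheses of Theorem 5.8 is reasonable diligence, but the paper does not address it either.
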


As pointed out in \cite{chen2011normal}, naturally, the constant in Theorem \ref{Th3} is better that the one in Theorem \ref{Th4}.

\subsection*{Acknowledgement}

We would like to thank Professor Goldstein for pointing out the paper \cite{paguyo2021convergence} and for various communications during the preparation of this paper. OA would like to thank Clemens Huemer for initial discussions on the topic that led to work in this problem.

Santiago Arenas-Velilla was supported by a scholarship from CONACYT.

Octavio Arizmendi was supported by CONACYT Grant CB-2017-2018-A1-S-9764.
\\
{\begin{minipage}[l]{0.3\textwidth} \includegraphics[trim=10cm 6cm 10cm 5cm,clip,scale=0.15]{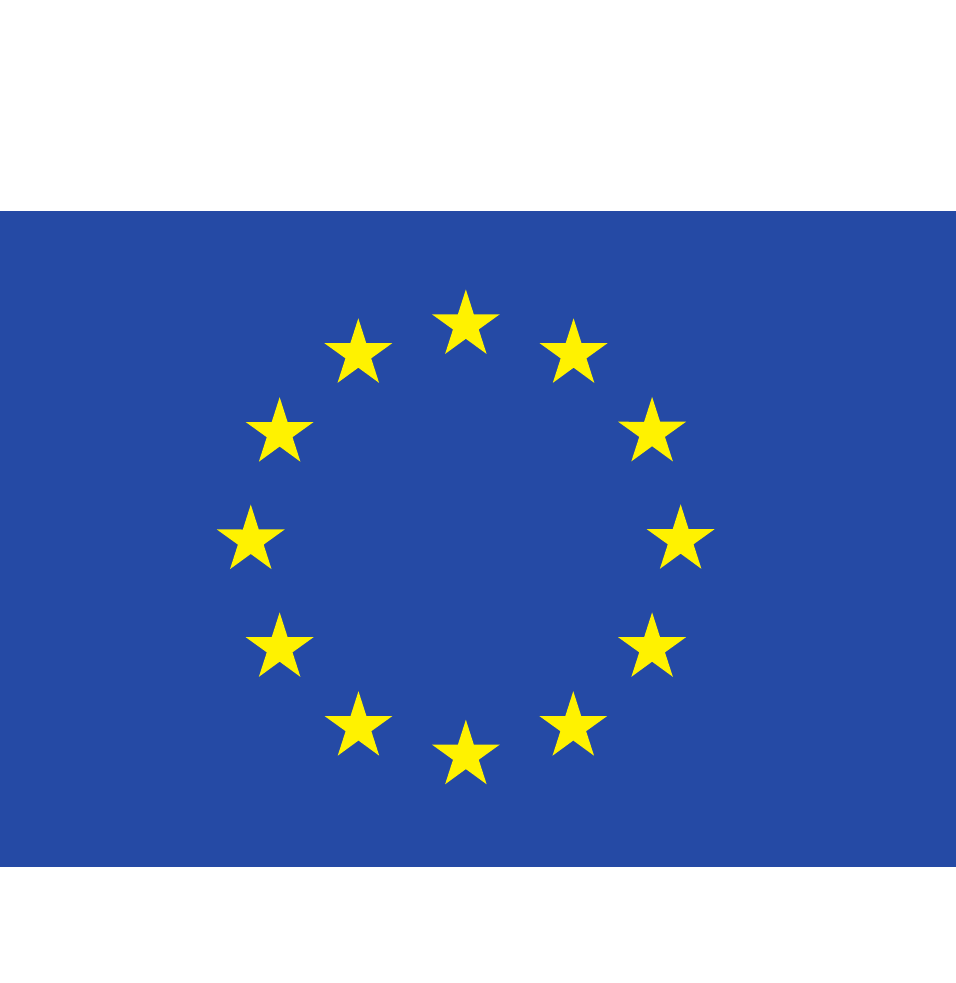} \end{minipage}
 \hspace{-3.2cm} \begin{minipage}[l][1cm]{0.82\textwidth}
 	  This project has received funding from the European Union's Horizon 2020 research and innovation programme under the Marie Sk\l{}odowska-Curie grant agreement No 734922.
		 	\end{minipage}}

\bibliographystyle{plain}
\bibliography{references}
\end{document}